\documentclass[12pt,reqno]{amsart}
\usepackage[dvips]{graphics}
\usepackage{amssymb}
\usepackage{dsfont}

\numberwithin{equation}{section}

\newtheorem{thm}{Theorem}[section]
\newtheorem*{thm*}{Theorem}
\newtheorem*{thmmain*}{MAIN THEOREM}
\newtheorem{lem}[thm]{Lemma}
\newtheorem{cor}[thm]{Corollary}
\newtheorem{prop}[thm]{Proposition}
\newtheorem*{prop*}{Proposition}
 
\theoremstyle{definition}

\theoremstyle{remark}
\newtheorem{rem}{Remark}[section]

\newcommand{\tref}[1]{Theorem~\ref{#1}}
\newcommand{\cref}[1]{Corollary~\ref{#1}}
\newcommand{\pref}[1]{Proposition~\ref{#1}}

\newcommand{\lref}[1]{Lemma~\ref{#1}}

\newcommand{\rref}[1]{Remark~\ref{#1}}

\def\R{\mathds{R}}
\def\dim{\mathop{\text{dim}}}
\def\ind{\mathop{\text{ind}}}

\def\Jac{\mathop{\text{Jac}}}
\def\Lagr{\mathop{\text{Lagr}}}
\def\Sym{\mathop{\text{Sym}}}

\pagestyle{plain}

\begin{document}
\title{Notes on the Jacobi equation}

\author{Alexander Lytchak}
\address{A. Lytchak, Mathematisches Institut, Universit\"at Bonn,
Beringstr. 1, 53115 Bonn, Germany}
\email{lytchak\@@math.uni-bonn.de}

\subjclass[2000]{53C20, 34C10}

\keywords{Jacobi fields, conjugate points, focal points, index}

\begin{abstract}
We discuss some properties of  Jacobi fields  that do not involve
assumptions on the curvature endomorphism. We
compare indices of different  spaces of
Jacobi fields and  give some  applications to  Riemannian geometry.
\end{abstract}

\thanks{The author was supported in part by the SFB 611 
{\it Singul\"are Ph\"anomene und Skalierung in mathematischen Modellen.}}

\maketitle
\renewcommand{\theequation}{\arabic{section}.\arabic{equation}}

\pagenumbering{arabic}

\section{Introduction}
This note is  a collection of results about conjugate
points of Jacobi fields for which we could not find an appropriate
reference in the literature, while we were working on 
conjugate points in quotients of Riemannian manifolds.
We discuss  here some basic results  about indices of spaces
of Jacobi fields that do not involve Morse theory
nor   comparison  results. For the latter the reader can consult  
\cite{eschen} or any text book
on Riemannian geometry, for instance \cite{Sakai}.

Let $V$ be an $m$-dimensional Euclidean vector space and let $R(t), t\in  I$
be a smooth family of symmetric endomorphisms defined on an interval 
$I \subset \mathbb R$.  The equation $$Y'' (t) +R(t) Y(t)=0$$ is called
the {\it Jacobi equation} defined by $R(t)$. Solutions of the Jacobi
equations are called {\it Jacobi fields}.  By $\Jac$ we will denote 
the vector space of all Jacobi fields. For any $t \in I$ we have an 
identification $I^t: \Jac \to V\times V$, given by $J\to (J(t),J' (t) )$.
 On $\Jac$  the symplectic form 
$\omega (J_1,J_2) = \langle J_1 (t) , J_2 ' (t) 
\rangle  - \langle J_ 1 ' (t) , J_2 (t) \rangle$
is independent of $t$, due to the symmetry of $R(t)$.
For any $t$, this symplectic form corresponds to the canonical
symplectic form on $V\times V$ via the identification $I^t$.

 For a subspace $W\subset \Jac$ we denote by $W^{\perp}$ the orthogonal
complement of $W$ with respect to $\omega$. The subspace $W$ is called
{\it isotropic} if $W\subset W^{\perp}$; and it is called {\it Lagrangian}
if $W=W^{\perp}$. 

Let $W$ be an isotropic subspace of $\Jac$. For $t\in I$ we set
$W(t) = \{ J(t) | J\in W \}$ and $W^t =\{ J\in W | J(t)=0 \}$.
We say that $t$ is {\it $W$-focal} if $\dim (W^t)= \dim (W)- \dim (W(t))>0$ and
call this number the {\it $W$-focal index} of $t$. This number  will be
denoted by $f^W(t)$.   For a  subinterval $I_0\subset I$, we define the
{\it $W$-index} of $I_0$ to be $\ind _W (I_0) = \Sigma _{t\in I_0} f^W(t)$.

 In Riemannian geometry one mostly considers the indices of special 
Lagrangians defined by some submanifolds (see \cite{Sakai} and
Subsection \ref{inter} below).  Here we emphasize a more abstract point
of view that involve all Lagrangian subspaces and, more general,
isotropic subspaces of the space of all Jacobian fields. For the
natural appearance of such situations one should look at \cite{Wilk},
the paper in which the  important tool of transversal Jacobi equation was
invented.

 Now we  can state our results, that seem to be known to the experts 
in many special cases. 
\begin{thm} \label{theorem}
Let $\Lambda _1, \Lambda _2 \subset \Jac$ be any Lagrangians.
Then for any interval $I_0 \subset I$ we have 
$\ind _{\Lambda _1} (I_0) -\ind _{\Lambda _2} (I_0) \leq \dim (V)$.
\end{thm}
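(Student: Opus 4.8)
I want to compare the focal indices of two Lagrangians by relating both to a single object. The natural device is to count, for a given Lagrangian $\Lambda$ and a subinterval $I_0$, the total focal index $\ind_\Lambda(I_0)$ as the change in some "winding" or "intersection number" quantity along $I_0$. The standard way to package this is via the identification $I^t \colon \Jac \to V \times V$, under which each $\Lambda$ becomes a smooth path of Lagrangian subspaces $\Lambda(t) := I^t(\Lambda)$ in the symplectic vector space $V \times V$, and the $W$-focal points are exactly the instants where $\Lambda(t)$ meets the vertical subspace $\{0\} \times V$ nontrivially, with focal index equal to the dimension of intersection. So the claim becomes a statement comparing how often two Lagrangian paths in $V\times V$ meet a fixed Lagrangian $L_0 = \{0\}\times V$.

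**The key step: a pointwise bound on intersection-dimension change.** The heart of the matter should be a local statement: if $\Lambda$ is a Lagrangian subspace of $\Jac$, then near any $t_0 \in I$, the focal contribution is governed by a finite-dimensional "crossing form." Concretely, I would fix $t_0$, split $\Lambda = \Lambda^{t_0} \oplus \Lambda'$ where $\Lambda^{t_0} = \{J \in \Lambda : J(t_0)=0\}$ has dimension $f := f^\Lambda(t_0)$, and show that the focal index contributed by a small neighbourhood of $t_0$ is controlled: in fact for $t$ close to $t_0$, $t\ne t_0$, one has $f^\Lambda(t)\le \dim \Lambda^{t_0}$, and more importantly the relevant information is encoded in the symmetric bilinear form on $\Lambda^{t_0}$ given by $Q(J_1,J_2) = \langle J_1'(t_0), J_2'(t_0)\rangle$ (which is nondegenerate, since $J\in\Lambda^{t_0}$ with $J'(t_0)=0$ forces $J\equiv 0$). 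The signature-type data of such crossing forms is what gets compared. I would then invoke, or prove directly by a Gram–Schmidt / adapted-basis argument, that along any subinterval the sum $\sum_{t\in I_0} f^\Lambda(t)$ is finite and equals (up to boundary corrections of size at most $\dim V$) a quantity like $\tfrac12\big(\dim \Lambda(b)^{\perp_0} - \dim\Lambda(a)^{\perp_0}\big)$ plus a sum of local indices — i.e. a Maslov-type index. The cleanest path is: reduce to the case $I_0$ is a single point (trivial, $f^{\Lambda_1}(t)-f^{\Lambda_2}(t)$ can be as large as $\dim V$ but no larger, since each focal index is at most $\dim\Lambda_i(t)^{perp}\le\dim V$) — wait, that's false for general intervals, so the real work is a telescoping/continuity argument showing the *accumulated* difference stays bounded by $\dim V$.

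**The cleanest line I would actually pursue.** Rather than full Maslov machinery, I would argue as follows. Pick a complement and use the symplectic reduction by the subspace $L_0 = \{0\}\times V$. For a generic Lagrangian $\Lambda$, write $\Lambda$ (under $I^t$) as the graph of a symmetric endomorphism $A_\Lambda(t)$ wherever it is transverse to $L_0$; the focal points of $\Lambda$ in $I_0$ are the poles of $A_\Lambda(t)$, and the focal index at each is the number of eigenvalues going to $+\infty$ minus... actually, by monotonicity (which holds for *this* particular family because the "velocity" of a Lagrangian path coming from a Jacobi equation has a definite-sign component — the $\langle J_1'(t),J_2'(t)\rangle$ block), the index $\ind_\Lambda(I_0)$ counts exactly the number of eigenvalues of $A_\Lambda$ that blow up through the interval, i.e. it is monotone in a suitable sense. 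Then for two Lagrangians $\Lambda_1,\Lambda_2$, the difference of these counts over $I_0=[a,b]$ telescopes into boundary terms $\dim(\Lambda_1 \cap \Lambda_2 \cap \dots)$ at $a$ and $b$, each bounded by $\dim V = \dim L_0$, and the interior monotone contributions *cancel* because $\Lambda_1$ and $\Lambda_2$ have the same "symbol" (the family $R(t)$ is the same). This cancellation of the bulk, leaving only endpoint defects of size $\le \dim V$, is exactly the asserted inequality.

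**Main obstacle.** The genuine difficulty is handling *non-transverse* crossings and accumulation of focal points — a priori nothing forbids infinitely many focal points in $I_0$, or high-dimensional focal subspaces, so I cannot just work with the graph $A_\Lambda(t)$ globally. I expect the crux of the proof to be a careful local normal form near a focal time $t_0$: decompose $V$ according to the subspace $\Lambda(t_0) := \{J(t_0): J\in\Lambda\}$ and its orthogonal complement, show the "bad" directions contribute a bounded, sign-definite jump (bounded by the dimension of that complement, hence by $\dim V$ only at finitely many "global" crossings), and patch these local pictures together along a compact subinterval via a compactness/finiteness argument. Getting the bookkeeping right so that *all* the interior jumps of $\Lambda_1$ and $\Lambda_2$ match and cancel — and only a single $\dim V$ survives, not a $\dim V$ per focal point — is where the symmetry of $R(t)$ and the symplectic invariance of $\omega$ must be used in full force.
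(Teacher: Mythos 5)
Your setup (the paths $t\mapsto I^t(\Lambda_i)$ in the Lagrangian Grassmannian of $V\times V$, focal points as intersections with $\Lambda_0=\{0\}\times V$, and the positive-definite crossing form $Q(J_1,J_2)=\langle J_1'(t_0),J_2'(t_0)\rangle$ on $\Lambda^{t_0}$) is correct and is in fact the machinery the paper uses — but only for the continuity statement (Proposition \ref{propo}), not for Theorem \ref{theorem} itself. The gap in your proposal is precisely the step you flag at the end: the assertion that ``the interior monotone contributions cancel because $\Lambda_1$ and $\Lambda_2$ have the same symbol, leaving only endpoint defects of size $\le\dim V$'' is not an argument, it is a restatement of the theorem. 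Both $\ind_{\Lambda_1}(I_0)$ and $\ind_{\Lambda_2}(I_0)$ are sums of positive crossings and can individually be arbitrarily large; nothing in your telescoping sketch explains why their difference does not accumulate a contribution at each focal time. What you would need is the genuine comparison result for two Lagrangian paths obtained by applying the \emph{same} symplectic flow to two initial Lagrangians (the H\"ormander-index bound, in Duistermaat's language), and that is a theorem requiring its own proof, not bookkeeping; your Riccati/graph picture also breaks down exactly at the crossings you must control, as you note yourself.

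The paper takes a different and more elementary route, which you may want to compare: it proves the sharper Proposition \ref{gen} by induction on $\dim V$. If $W=\Lambda_1\cap\Lambda_2\neq 0$, Wilking's transversal Jacobi equation together with the index decomposition $\ind_{\Lambda_i}(I_0)=\ind_W(I_0)+\ind_{\Lambda_i/W}(I_0)$ (Lemma \ref{inddeco}) reduces the claim to a Jacobi equation on a space of dimension $\dim V-\dim W$; if $W=0$, one inserts an auxiliary Lagrangian $\Lambda_3$ meeting both and applies the intersecting case twice. The base case $\dim V=1$ is settled by a topological argument: $\Lagr\cong S^1$, the map $c\mapsto\Lambda^c$ is locally injective by Lemma \ref{dim1}, and an index difference of $2$ would force this map to cover all of $S^1$, hence to hit $\Lambda_2$, a contradiction. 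So the ``hard core'' that your sketch leaves open is replaced in the paper by the one-dimensional topological fact plus the reduction lemma. If you wish to keep your Maslov-theoretic line, you must either import and prove the two-path comparison inequality or supply an equivalent reduction; as written, the central inequality is assumed rather than established.
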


See \pref{gen}, for a slightly more general statement.  As a 
consequence of \tref{theorem} we deduce:
\begin{cor} \label{complete}
 Let $M$ be a  Riemannian manifold without conjugate points. Then
for any submanifold $N$ of $M$ and any geodesic $\gamma$ orthogonal to
$N$ there are at most $\dim (N)$ focal points of $N$ along $\gamma$ (counted
with multiplicity).  
\end{cor}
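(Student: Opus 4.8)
I would translate the statement into the setting of \tref{theorem} and apply the index comparison to a well chosen pair of Lagrangians. Parametrize $\gamma$ on an interval $I$ with left endpoint $0$, with $p:=\gamma(0)$ the point where $\gamma$ meets $N$ and $\dot\gamma(0)$ the prescribed unit normal. Let $V=\dot\gamma(0)^{\perp}\subset T_pM$, and let $R(t)$ be the restriction to $\dot\gamma^{\perp}$ of the curvature operator $v\mapsto R(v,\dot\gamma(t))\dot\gamma(t)$, read off in a parallel orthonormal frame along $\gamma$; then $\Jac$ is identified with the space of Jacobi fields along $\gamma$ normal to $\gamma$, and $\dim V=\dim M-1$. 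Set
$$\Lambda_0=\{\,J\in\Jac\st J(0)=0\,\},\qquad
\Lambda_N=\{\,J\in\Jac\st J(0)\in T_pN,\ J'(0)+S(J(0))\perp T_pN\,\},$$
where $S$ is the shape operator of $N$ at $p$ in the direction $\dot\gamma(0)$. Then $\Lambda_0$ is Lagrangian; and $\Lambda_N$ is isotropic, because $S$ is self-adjoint, and of dimension $\dim V$, hence Lagrangian. Two standard facts complete the dictionary: for $t\neq0$, the parameter $t$ is $\Lambda_N$-focal exactly when $\gamma(t)$ is a focal point of $N$ along $\gamma$, with $f^{\Lambda_N}(t)$ its multiplicity, so the number of focal points counted with multiplicity is $\ind_{\Lambda_N}(I\setminus\{0\})$; and $f^{\Lambda_0}(t)$ is the multiplicity of $\gamma(t)$ as a conjugate point of $p$, so, $M$ having no conjugate points, $\ind_{\Lambda_0}(I\setminus\{0\})=0$.

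Since $I\setminus\{0\}$ is an interval, \tref{theorem} applied to $\Lambda_1=\Lambda_N$, $\Lambda_2=\Lambda_0$ gives
$$\ind_{\Lambda_N}(I\setminus\{0\})\le\ind_{\Lambda_0}(I\setminus\{0\})+\dim V=\dim M-1,$$
which already proves the corollary when $N$ is a hypersurface. In general it overshoots by $\dim M-1-\dim N=\codim_M N-1$, and removing this surplus is the one substantive point. The two Lagrangians contain the common isotropic subspace
$$\Lambda_N\cap\Lambda_0=\{\,J\in\Jac\st J(0)=0,\ J'(0)\perp T_pN\,\},$$
of dimension precisely $\dim M-1-\dim N$. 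Feeding this into the sharper form \pref{gen} of \tref{theorem} --- a mild strengthening in which, for two Lagrangians sharing an isotropic subspace $U$, the bound $\dim V$ on the right-hand side is replaced by $\dim V-\dim U$ --- yields $\ind_{\Lambda_N}(I\setminus\{0\})\le\dim V-\dim(\Lambda_N\cap\Lambda_0)=\dim N$, as claimed.

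The main obstacle is therefore neither the reduction to Jacobi fields, which is formal, nor the hypersurface case, which is immediate from \tref{theorem}, but the passage from the crude bound $\dim M-1$ to $\dim N$. Geometrically this says that the directions normal to $N$ contribute nothing to the focal count: an $N$-Jacobi field whose initial value is $0$ and whose initial velocity is normal to $N$ vanishes at $p$, hence lies in $\Lambda_0$, which has no focal parameters in $I\setminus\{0\}$ because $M$ has no conjugate points. Formally, that cancellation is exactly what the common isotropic subspace $\Lambda_N\cap\Lambda_0$, inserted into \pref{gen}, records. The only steps I would merely sketch rather than carry out in detail are that $\Lambda_N$ is Lagrangian and that $f^{\Lambda_N}$, $f^{\Lambda_0}$ have the stated geometric meanings.
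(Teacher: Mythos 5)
Your argument is correct, but it takes a genuinely different route from the paper. The paper applies \tref{theorem} with the crude bound $\dim(V)=n-1$, comparing $\Lambda^N$ to a Lagrangian that has \emph{no} focal points at all on $I$; such a Lagrangian is produced by letting $a$ run to the boundary of $I$, extracting a limit $\Lambda^\infty$ of the spaces $\Lambda^a$ (the ``parallel'' or stable Jacobi fields), and invoking the continuity statement of \pref{propo} to see that $\Lambda^\infty$ inherits the absence of focal points; the count of focal points is then $\ind_{\Lambda^N}(I)-f^{\Lambda^N}(0)\le (n-1)-((n-1)-\dim N)=\dim N$. You instead compare $\Lambda^N$ directly with the concrete Lagrangian $\Lambda^0$ of Jacobi fields vanishing at the foot point, and you get the improvement from $n-1$ to $\dim N$ not by subtracting the focal index at $0$ against a focal-point-free comparison space, but from the intersection term in \pref{gen}: $\dim(\Lambda^N\cap\Lambda^0)=n-1-\dim N$, and the no-conjugate-point hypothesis kills $\ind_{\Lambda^0}$ away from $0$. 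Your computation of the intersection and of the indices checks out, and the interval $I\setminus\{0\}$ is legitimate input for \pref{gen} once $0$ is an endpoint. What each approach buys: yours is more elementary in that it needs no limiting Lagrangian, no convergence of $\Lambda^a$, and no appeal to \pref{propo} at all, only the refined inequality \pref{gen} (which the paper proves anyway); the paper's argument, on the other hand, bounds $\ind_{\Lambda^N}$ over the \emph{entire} interval of definition of $\gamma$ at once, so it gives at most $\dim N$ focal points counting both sides of $N$ when $\gamma$ extends through $\gamma(0)$, whereas your set-up (with $0$ the left endpoint) bounds the count on the ray emanating from $N$, and applying it separately on each side would only yield $\dim N$ per side. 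Under the usual one-sided reading of ``focal points of $N$ along $\gamma$'' your proof is complete; if the two-sided statement is intended, that is the one point where your method gives slightly less than the paper's.
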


 Another direct consequence is a non-geometric proof of the following
well known differential geometric result (see Subsection \ref{conjpoit} for
the definition of conjugate points): 
\begin{cor} \label{con}
Let $V,R,\Jac$ be as above. If for some $a<b \in I$ the points $a$ and
$b$ are conjugate, then for each $\bar a \leq a$ there is some 
$\bar b \in [a,b]$ that is conjugate to $\bar a$.
\end{cor}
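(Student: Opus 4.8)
The plan is to derive the statement directly from \tref{theorem} by comparing the indices of the two point-Lagrangians attached to $a$ and to $\bar a$. For $s\in I$ put $\Lambda_s:=\{J\in\Jac\st J(s)=0\}$. Under the identification $I^s$ this subspace corresponds to $\{0\}\times V\subset V\times V$, which is Lagrangian for the canonical symplectic form; hence each $\Lambda_s$ is a Lagrangian subspace of $\Jac$. Note also that, for $s\ne t$, the points $s$ and $t$ are conjugate exactly when $\Lambda_s^t\ne 0$, i.e.\ when $t$ is $\Lambda_s$-focal.

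If $\bar a=a$ there is nothing to prove, since $b\in[a,b]$ is conjugate to $a$ by hypothesis. So assume $\bar a<a$ and set $\Lambda_1:=\Lambda_a$, $\Lambda_2:=\Lambda_{\bar a}$. First I would bound $\ind_{\Lambda_1}([a,b])$ from below. Since $\Lambda_1(a)=0$, the evaluation at $a$ kills all of $\Lambda_1$, so $f^{\Lambda_1}(a)=\dim\Lambda_1-\dim\Lambda_1(a)=\dim V$. Moreover the nonzero Jacobi field witnessing the conjugacy of $a$ and $b$ lies in $\Lambda_1$ and vanishes at $b$, so $f^{\Lambda_1}(b)\ge 1$. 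As $a\ne b$, these are distinct summands, hence
\[
\ind_{\Lambda_1}([a,b]) \ge f^{\Lambda_1}(a)+f^{\Lambda_1}(b) \ge \dim V+1 .
\]

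Now I would apply \tref{theorem} to the pair $\Lambda_1,\Lambda_2$ on the interval $I_0=[a,b]$: it gives $\ind_{\Lambda_1}([a,b])-\ind_{\Lambda_2}([a,b])\le\dim V$, so $\ind_{\Lambda_2}([a,b])\ge 1$. Thus some $\bar b\in[a,b]$ is $\Lambda_2$-focal, i.e.\ there is a nonzero Jacobi field vanishing at both $\bar a$ and $\bar b$. Since $\bar b\ge a>\bar a$ we have $\bar b\ne\bar a$, so $\bar a$ and $\bar b$ are conjugate, which is what we wanted.

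Once \tref{theorem} is available there is no real obstacle — the argument is just bookkeeping with focal indices — but two points deserve care. First, the left endpoint $a$ of $I_0$ is itself $\Lambda_1$-focal with the full index $\dim V$; this is precisely why one must additionally produce the extra $+1$ (supplied by the assumed conjugacy of $a$ and $b$, occurring at $b$ or earlier in $(a,b]$) in order to overcome the slack of $\dim V$ allowed by \tref{theorem}. Second, the degenerate case $\bar a=a$, where $\bar a\notin(a,b]$, has to be dispatched separately, though there the conclusion is immediate.
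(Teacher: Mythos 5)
Your proof is correct and is essentially the paper's own argument: it bounds $\ind_{\Lambda^a}([a,b])\geq \dim(V)+1$ using the full focal index at $a$ plus the conjugacy at $b$, and then invokes \tref{theorem} to force $\ind_{\Lambda^{\bar a}}([a,b])\geq 1$. The only differences are cosmetic (a direct formulation instead of a contradiction, and the explicit, correct handling of the degenerate case $\bar a=a$).
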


 Another important issue for which we could not find a reference is 
the following semi-continuity and continuity statement. For  similar
continuity statements in the more general context of semi-Riemannian geometry
the reader should consult \cite{piccione}.

\begin{prop} \label{propo}
 Let $R_n (t)$ be a sequence of families of
symmetric endomorphisms converging in the $\mathcal C^0$ topology to $R(t)$.
Let $W_n$ be isotropic subspaces of $R_n$-Jacobi fields that converge
to an isotropic subspace $W$ of $R$-Jacobi fields.
Let $I_0 =[a,b]\subset I$ be a compact interval and assume that 
$f^{W_n} (a) =f^W(a)$ and $f^{W_n} (b)= f^W (b)$, for all $n$
large enough. Then 
 $\ind _W (I_0) \geq \ind _ {W_n} (I_0)$, for all $n$ large enough.  
If all $W_n$ are Lagrangians then this inequality becomes  an equality.
\end{prop}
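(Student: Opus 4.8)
The plan is to reduce the statement, by elementary continuity of solutions of the linear ODE, to a purely local assertion near each $W$-focal point, and to settle that assertion by passing to a Lagrangian completion and using the positivity of the Maslov crossings of a Jacobi--Lagrangian path. Two preliminaries come first. \emph{(a) $W$-focal points are isolated.} If $t^*$ is $W$-focal, fix a complement $U_1$ of $W^{t^*}$ in $W$; for $J\in W^{t^*}$ one has $J'(t^*)\neq 0$, and isotropy of $W$ forces $\{J'(t^*):J\in W^{t^*}\}\cap W(t^*)=0$, because $\omega(J,K)=-\langle J'(t^*),K(t^*)\rangle$ whenever $J\in W^{t^*}$, $K\in W$. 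Writing an adapted basis (first vectors spanning $W(t^*)$, the rest in $W^{t^*}$) and Taylor expanding, one sees $\dim W(t)=\dim W$ for $t$ near $t^*$, $t\neq t^*$; so $\ind_W(I_0)$ is a finite sum. \emph{(b) Uniform convergence of evaluations.} Choosing bases $J^n_i$ of $W_n$ with $(J^n_i(t_0),(J^n_i)'(t_0))\to(J_i(t_0),J_i'(t_0))$ for a fixed $t_0$ and a basis $J_i$ of $W$, Gronwall's inequality on the integral form of $Y''=-R_nY$ (using $R_n\to R$ in $\mathcal C^0$ and a priori bounds on compacta) gives $J^n_i\to J_i$ in $\mathcal C^1$ uniformly on compact subintervals; hence the evaluation matrices $A_n(t):=[J^n_1(t)\,|\cdots|\,J^n_k(t)]$ converge to $A(t)$ in $\mathcal C^1$ on $I_0$. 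A nonvanishing maximal minor of $A$ at a non-$W$-focal point persists for $A_n$ nearby, so by (a) and compactness of $I_0$ one gets: for $n$ large, \emph{all} $W_n$-focal points of $I_0$ lie in prescribed small disjoint neighborhoods of the finitely many $W$-focal points of $I_0$.

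It therefore suffices to prove the \emph{local bound}: for each $W$-focal $t^*$ there are $\varepsilon>0$ and $N$ with $\ind_{W_n}\!\big([t^*-\varepsilon,t^*+\varepsilon]\cap I\big)\le f^W(t^*)$ for $n\ge N$, with equality when $W$ and all $W_n$ are Lagrangian. Reduce this to the Lagrangian case: choose a Lagrangian $\Lambda\subset\Jac$ with $W\subset\Lambda$ and $\Lambda^{t^*}=W^{t^*}$ (such $\Lambda$ exist, since in the symplectic reduction $W^\perp/W$ the incidence subspace coming from $\{0\}\times V$ has exactly complementary dimension, so a generic Lagrangian avoids the unavoidable intersection $W^{t^*}$). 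Shrinking $\varepsilon$, assume $t^*$ is the only $\Lambda$-focal point in $[t^*-\varepsilon,t^*+\varepsilon]$ and the endpoints are non-$\Lambda$-focal, so $\ind_\Lambda([t^*-\varepsilon,t^*+\varepsilon])=f^\Lambda(t^*)=\dim W^{t^*}=f^W(t^*)$. Pick Lagrangians $\Lambda_n$ with $W_n\subset\Lambda_n$ and $\Lambda_n\to\Lambda$ (local triviality of the bundle of Lagrangian completions over the isotropic Grassmannian). Since $f^{W_n}(t)\le f^{\Lambda_n}(t)$ pointwise, $\ind_{W_n}([t^*-\varepsilon,t^*+\varepsilon])\le\ind_{\Lambda_n}([t^*-\varepsilon,t^*+\varepsilon])$, and by the Lagrangian case below the right side equals $\ind_\Lambda([t^*-\varepsilon,t^*+\varepsilon])=f^W(t^*)$; when $W$ itself is Lagrangian, take $\Lambda=W$, $\Lambda_n=W_n$, and every inequality becomes an equality.

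The Lagrangian case is the heart of the matter, and it genuinely needs more than convergence of Jacobi determinants: as $R_n\to R$ only in $\mathcal C^0$, one controls $A_n\to A$ only in $\mathcal C^1$, which does not bound the zero count of $\det A_n$. The structural input is the \emph{positivity of crossings}. For a Jacobi--Lagrangian path $t\mapsto\ell(t):=\operatorname{span}\{(J(t),J'(t)):J\in\Lambda\}$ in $\Lagr(V\times V)$, the crossing form with the Maslov cycle $\Sigma=\{0\}\times V$ at a crossing $t^*$ is $J\mapsto|J'(t^*)|^2$, which is positive definite on $\ell(t^*)\cap\Sigma\cong\Lambda^{t^*}$. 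Hence every crossing is regular with contribution $+f^\Lambda(t^*)$, so $\ind_\Lambda([c,d])$ equals the Maslov index of $\ell|_{[c,d]}$ whenever $c,d$ are non-$\Lambda$-focal; the same holds for each $\Lambda_n$, a Jacobi--Lagrangian for $R_n$. The Maslov index is locally constant under $\mathcal C^0$-small perturbations of the path keeping the endpoints off $\Sigma$; since $\ell_n\to\ell$ in $\mathcal C^0$ (from $A_n\to A$ in $\mathcal C^1$) and $\ell(c),\ell(d)\notin\Sigma$, this yields $\ind_{\Lambda_n}([c,d])=\ind_\Lambda([c,d])$ for $n$ large. (Equivalently one argues by the monotone rotation, as $t$ increases, of the unitary operator attached to the Lagrangian path via the Cayley transform, which keeps the argument self-contained and avoids the Maslov-index vocabulary.)

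Finally, assemble. Extending the data slightly beyond $I_0$ if necessary, we may assume every $W$-focal point of $I_0$, including $a$ and $b$, is interior to $I$, so each is covered by the two-sided local bound. Summing over these finitely many points, $\ind_{W_n}(I_0)=\sum_{t^*}\ind_{W_n}(\text{small nbhd of }t^*)\le\sum_{t^*}f^W(t^*)=\ind_W(I_0)$, and in the Lagrangian case each summand is an equality. The hypotheses $f^{W_n}(a)=f^W(a)$, $f^{W_n}(b)=f^W(b)$ enter exactly here: when $a$ is $W$-focal, the two-sided local bound reads $\ind_{W_n}([a-\varepsilon,a))+f^{W_n}(a)+\ind_{W_n}((a,a+\varepsilon])\le f^W(a)=f^{W_n}(a)$, forcing $\ind_{W_n}((a,a+\varepsilon])=0$, i.e.\ no $W_n$-focal points slip in just to the right of $a$; without it the index could be strictly larger. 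The main obstacle of the proof is thus precisely the Lagrangian local statement: it must be derived from the positivity of crossings, since convergence of the Jacobi determinants alone is too weak to control the number of focal points.
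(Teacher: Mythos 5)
Your proof is correct, and its core for the Lagrangian (equality) part is the same as the paper's: identify $\ind_\Lambda$ with the Maslov--Arnold index of the curve $t\mapsto\{(J(t),J'(t))\}$ via the positive-definite crossing form $J\mapsto \|J'(t^*)\|^2$ on the intersection with $\{0\}\times V$, and invoke stability of that index under $\mathcal C^0$-small perturbation when the endpoints are non-focal. Where you genuinely diverge is the isotropic (inequality) part. The paper keeps this elementary: a Rauch-type comparison lemma shows that focal spaces $W^{t^+},W^{t^-}$ at nearby focal instants are almost orthogonal for the product $s_t$, which gives both discreteness and the local bound ``clustered $W_n$-index $\le\dim W^{t}$'' directly, with no symplectic machinery and no boundary hypotheses. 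You instead localize at each $W$-focal $t^*$, complete $W$ to a Lagrangian $\Lambda\supset W$ with $\Lambda^{t^*}=W^{t^*}$ (your reduction argument in $W^{\perp}/W$ is the right justification), choose Lagrangian completions $\Lambda_n\supset W_n$ with $\Lambda_n\to\Lambda$, and use monotonicity $f^{W_n}\le f^{\Lambda_n}$ plus the Lagrangian Maslov equality to cap the local $W_n$-index by $f^W(t^*)$. This buys a unified treatment (one mechanism, the Maslov stability, serves both halves) at the price of importing the symplectic argument already into the semi-continuity statement and of two auxiliary facts you assert rather than prove (continuously varying Lagrangian completions over converging isotropics, e.g.\ via a symplectomorphism $\Phi_n\to\Id$ carrying $W$ to $W_n$; and the extension of the data slightly beyond $I_0$, harmless since only continuity of $R$ is needed). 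Your bookkeeping at the endpoints is also organized differently but correctly: the paper enlarges $[a,b]$ to $[a-\epsilon,b+\epsilon]$ with no new focal points and applies stability once globally, whereas you sum local contributions and use $f^{W_n}(a)=f^W(a)$, $f^{W_n}(b)=f^W(b)$ to rule out leakage of focal points across $a$ and $b$. One small inaccuracy in a side remark: without the boundary hypothesis it is the \emph{equality} that fails, because $W_n$-focal points can drift just outside $[a,b]$ and make $\ind_{W_n}(I_0)$ strictly \emph{smaller}; the inequality $\ind_W(I_0)\ge\ind_{W_n}(I_0)$ never needs that hypothesis, as your own local bound shows.
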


 We prove \tref{theorem} and its Corollaries 
by using the continuity principle above  and by
reducing the claim to the $1$-dimensional situation with the help
of Wilkings transversal Jacobi equation. The proof involves the decomposition
of the index in the sum of the vertical and the horizontal indices with
respect to an isotropic subspace (\lref{inddeco}).
 See Subsection \ref{inter1} and \cite{Expl} 
for a geometric interpretation of these notions. We would like to
mention that  the non-continuity 
of indices for isotropic non-Lagrangian spaces and the
decomposition formula  of \lref{inddeco} cause  strange non-continuous
behavior of indices in quotients of Riemannian manifolds 
(\cite{Expl} and \rref{remark}).

In Section \ref{Sec2} we discuss basic facts about Jacobi fields, prove the
semi-continuity part of \pref{propo} and recall the arguments of \cite{Dui}
that relate the index to Lagrangian intersections and imply  the continuity 
part of \pref{propo}. In Section \ref{Sec3} we recall the construction
of the transversal Jacobi equation, due to Wilking. In Section \ref{Sec4}
we prove the remaining results.

I would like to thank Gudlaugur Thorbergsson for fruitful discussions.
I am grateful  to   Paulo Piccione for helpful remarks and for 
the  reference  \cite{piccione}.

\section{Semi-continuity and continuity of indices} \label{Sec2}
\subsection{Semi-continuity} We start with the only simple
comparison result that will be used in the sequel.

\begin{lem}
 Let $V,R,\Jac$ be as in the introduction. Assume that $||R(t)||$ is
bounded  above by $C^2 \in \R$. Let $J\in \Jac$ be a Jacobi field
with $J(t^- ) =0$. Then for all $t^+ \in I$ with
$|t^+ -t^-| < \frac 1 {2C}$ we have 
$|| J(t^+) - (t^+-t^-) \cdot J' (t^+)|| \leq C \cdot|| J' (t^+)||\cdot 
(t^+ -t^-)^2$.
\end{lem}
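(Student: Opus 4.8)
The plan is to turn the Jacobi equation into an integral identity for the quantity $J(t^+)-(t^+-t^-)J'(t^+)$ and then estimate it, using the smallness of $|t^+-t^-|$ to bootstrap a bound on $\|J\|$ over the interval in terms of $\|J'(t^+)\|$. I may assume $t^-\le t^+$: the opposite case is obtained by reversing the time parameter $t\mapsto -t$, which preserves the Jacobi equation and the bound $\|R\|\le C^2$ and interchanges the two cases. Write $h=t^+-t^-\ge 0$ and $L=[t^-,t^+]$.

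First I would integrate twice. Since $J''=-RJ$, for $t\in L$ one has $J'(t)=J'(t^+)+\int_t^{t^+}R(s)J(s)\,ds$; integrating this over $L$ and using $J(t^-)=0$ gives
\[
 J(t^+)-h\,J'(t^+)=\int_{t^-}^{t^+}\!\int_t^{t^+}R(s)J(s)\,ds\,dt .
\]
Thus everything reduces to bounding the right-hand side, for which I need an a priori estimate of $\|J(s)\|$ on $L$.

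The key step is a bootstrap argument, and it is also the only slightly delicate point. Put $M=\max_{t\in L}\|J'(t)\|$. From $J(t^-)=0$ we get $\|J(s)\|\le (s-t^-)M\le hM$ for $s\in L$, hence from the formula for $J'(t)$ above (before integrating in $t$), $\|J'(t)-J'(t^+)\|\le C^2Mh\,(t^+-t)\le C^2h^2M$ for every $t\in L$. Taking the maximum over $t\in L$ yields $M\le\|J'(t^+)\|+C^2h^2M$; since $h<\frac{1}{2C}$ we have $C^2h^2<\frac{1}{4}$, and therefore $M\le\frac{4}{3}\|J'(t^+)\|$. This is the only place where the hypothesis $|t^+-t^-|<\frac{1}{2C}$ is used, and the absorption of the $M$-term into the left-hand side is the one nonroutine maneuver.

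Finally I would substitute $\|J(s)\|\le hM\le\frac{4}{3}h\,\|J'(t^+)\|$ and $\|R(s)\|\le C^2$ into the integral identity. Since $\int_{t^-}^{t^+}(t^+-t)\,dt=\frac{h^2}{2}$, the double integral is bounded by $C^2\cdot\frac{4}{3}h\,\|J'(t^+)\|\cdot\frac{h^2}{2}=\frac{2}{3}C^2h^3\|J'(t^+)\|$. As $h<\frac{1}{2C}<\frac{3}{2C}$, we have $\frac{2}{3}C^2h^3\le Ch^2$, so $\|J(t^+)-h\,J'(t^+)\|\le C\,\|J'(t^+)\|\,h^2$, which is the assertion.
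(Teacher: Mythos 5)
Your argument is correct, and it takes a somewhat different route from the paper at the decisive step. The paper also reduces the claim to an a priori bound for $\|J\|$ on $[t^-,t^+]$ plus a second-order Taylor estimate, but it obtains the a priori bound by quoting Rauch's comparison theorem, deducing $\|J(t)\|\leq \frac1C\|J'(t^-)\|$, hence $\|J''\|\leq C\|J'(t^-)\|$ and $\|J'(t^+)\|\geq\frac12\|J'(t^-)\|$, and then applies the Lagrange form of the Taylor remainder; everything there is anchored at $\|J'(t^-)\|$ and converted to $\|J'(t^+)\|$ at the end. You instead write $J(t^+)-(t^+-t^-)J'(t^+)$ exactly as the double integral of $R\,J$ (the integral form of the remainder) and replace the appeal to Rauch by a self-contained Gronwall-type absorption: $M=\max_L\|J'\|$ satisfies $M\leq\|J'(t^+)\|+C^2h^2M$, so $M\leq\frac43\|J'(t^+)\|$ because $C^2h^2<\frac14$, which is where the hypothesis $h<\frac1{2C}$ enters (the paper uses it both in Rauch's estimate and to compare $\|J'(t^-)\|$ with $\|J'(t^+)\|$). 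Your bootstrap is sound (note $M<\infty$ by compactness, and the time-reversal reduction to $t^-\leq t^+$ is legitimate since $t\mapsto -t$ preserves the Jacobi equation and the bound on $R$), and the final numerics check out: $\frac23C^2h^3\leq Ch^2$ for $h\leq\frac3{2C}$. What your version buys is independence from comparison theory — the lemma becomes entirely elementary, which fits the paper's stated aim of avoiding comparison results, whereas the paper's version is shorter at the price of citing Rauch and routes the estimate through $\|J'(t^-)\|$.
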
 

\begin{proof}
We may assume $t^+ >t^-$.
From Rauch's comparison theorem (\cite{Sakai},p.149)  we deduce
$$||J(t)|| \leq \frac 1 {2C}  e^{C |t-t^-|} \cdot || J' (t^-)|| \leq
\frac 1 C || J' (t^-)||$$ for all $t\in [t^-,t^+]$. Thus
$||J'' (t)|| \leq C ||J' (t^-)||$, for all $t\in [t^-,t^+]$. 
Hence
$$||J' (t^+)|| \geq ||J' (t^-)|| - C\cdot ||J' (t^-)|| \cdot |t^+ -t^-|
\geq \frac 1 2  || J' (t^-)||$$

 Due to the Taylor formula, we find some $t\in [t^-,t^+]$ with
$$ || J(t^+) - (t^+-t^-) \cdot J' (t^+)||\leq \frac 1 2 || J'' (t)||  \cdot  
(t^+ -t^-)^2$$
The desired estimate now follows from 
 $$\frac 1 2  || J'' (t)|| \leq \frac 1 2  C \cdot ||J' (t^-)|| 
\leq  C \cdot || J' (t^+)||$$
\end{proof}

 Let again  $C^2\in \mathbb R$ be an upper bound for $||R(t)||$. Let $W$
be an isotropic subspace of $\Jac$ and let $t^+  >t^-$ be
$W$-focal points, with $|t^+- t^- | < \frac 1 {2C}$.
 Choose any $J_+ \in W^{t^+} $ and $J_- \in W^{t^-}$. 
Since $W$ is isotropic, we have $\langle J_{-} (t^+),J_{+}' (t^+) \rangle =0$.
From the last lemma we obtain now
$$\langle J_{-} ' (t^+) , J_{+} ' (t^+) \rangle  \leq  
 C \cdot  |t^+-t^-| \cdot ||J'_{-} (t^-)|| \cdot  ||J'_{+} (t^+)||$$

Thus $J^+$ and $J^-$ are almost orthogonal with respect to
the scalar product $s_{t+}$ on $\Jac$ defined by
$$s_{t^+} (J_1,J_2) := \langle J_1 (t^+), J_2 (t^+) \rangle +
\langle J_1 ' (t^+) , J_2 ' (t^+) \rangle$$
This has the following  consequences (cf. \cite{Sakai},p.61,p.101):

\begin{lem}
Let $W$ be an isotropic subspace of $\Jac$. Then the $W$-focal points
are discrete in $I$.  Moreover, there is some number $\epsilon$, that depends
only on an upper bound of $||R(t)||$, such that for an interval $I_0$ of
length $\leq \epsilon$ the inequality $\ind _W  (I_0 ) \leq \dim (W)$ holds. 
\end{lem}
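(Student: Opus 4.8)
The plan is to prove the two assertions---discreteness of the $W$-focal points, and the bound $\ind_W(I_0)\le\dim(W)$ for short $I_0$---almost independently, both resting on the almost-orthogonality observation preceding the statement. Throughout, $C^2$ denotes an upper bound of $\|R(t)\|$, and $\epsilon$ is a positive parameter with $\epsilon<\tfrac1{2C}$ to be fixed at the end; I use freely from the proof of the previous Lemma that a nonzero Jacobi field $J$ with $J(t^-)=0$ satisfies $\tfrac12\|J'(t^-)\|\le\|J'(t)\|\le\tfrac32\|J'(t^-)\|$ whenever $|t-t^-|<\tfrac1{2C}$.

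For discreteness I argue by contradiction and compactness. Suppose distinct $W$-focal points $t_n$ accumulate at a point $t_0\in I$; after passing to a strictly monotone subsequence, pick $J_n\in W^{t_n}$ with $\|J_n'(t_n)\|=1$. The previous Lemma shows the $J_n$ are bounded in the Euclidean norm $\sqrt{s_{t_0}}$, so a further subsequence converges in $\Jac$ to some $J_\infty\in W$ which satisfies $J_\infty(t_0)=0$ and $\|J_\infty'(t_0)\|=1$, hence $J_\infty\ne0$. On the other hand, for two consecutive indices $m<n$ of this subsequence the almost-orthogonality estimate gives $|s_{t_m}(J_n,J_m)|=|\langle J_n'(t_m),J_m'(t_m)\rangle|\le\tfrac32C\,|t_m-t_n|\to0$; but by continuity of $t\mapsto s_t$ and of the convergence one has $s_{t_m}(J_n,J_m)\to s_{t_0}(J_\infty,J_\infty)\ge\|J_\infty'(t_0)\|^2=1$, a contradiction. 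Hence the $W$-focal points are locally finite; in particular any $I_0$ with $|I_0|\le\epsilon$ contains only finitely many.

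For the index bound the heart is the following claim: if $t_1<\dots<t_N$ are $W$-focal points lying in an interval of length $\le\epsilon$ and $J_i\in W^{t_i}$ satisfy $\sum_iJ_i=0$, and if $\tfrac32C\epsilon(N-1)<1$, then all $J_i=0$; consequently $W^{t_1}+\dots+W^{t_N}$ is a direct sum inside $W$. To prove it, set $b_i:=\|J_i'(t_i)\|=\|J_i\|_{s_{t_i}}$ and take the $s_{t_\ell}$-inner product of the relation with $J_\ell$. Since $J_\ell(t_\ell)=0$ this reduces to $b_\ell^2+\sum_{i\ne\ell}\langle J_i'(t_\ell),J_\ell'(t_\ell)\rangle=0$, and for $i\ne\ell$ the isotropy of $W$ (i.e. $\omega(J_i,J_\ell)=0$, read off at $t_\ell$) gives $J_\ell'(t_\ell)\perp J_i(t_\ell)$, so by Cauchy--Schwarz and the previous Lemma applied to $J_i$ on the interval between $t_i$ and $t_\ell$ one obtains $|\langle J_i'(t_\ell),J_\ell'(t_\ell)\rangle|\le\tfrac32C\,|t_\ell-t_i|\,b_ib_\ell\le\tfrac32C\epsilon\,b_ib_\ell$. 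Hence $b_\ell\le\tfrac32C\epsilon\sum_{i\ne\ell}b_i$ for every $\ell$, and summing over $\ell$ yields $\sum_ib_i\le\tfrac32C\epsilon(N-1)\sum_ib_i$, so $\sum_ib_i=0$.

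It remains to remove the restriction on $N$ by a bootstrap---this is the only genuinely delicate point. Fix now $\epsilon$ depending only on $C$ and on the fixed dimension $m=\dim(V)$, small enough that $\epsilon<\tfrac1{2C}$ and $\tfrac32C\epsilon(N-1)<1$ for all $N\le m+1$ (e.g. $\epsilon=\tfrac1{4C(m+1)}$). Given $I_0$ with $|I_0|\le\epsilon$, list its finitely many $W$-focal points $t_1<\dots<t_N$. If $W^{t_1}+\dots+W^{t_N}$ were not a direct sum, pick $j$ minimal with $(W^{t_1}\oplus\dots\oplus W^{t_{j-1}})\cap W^{t_j}\ne0$; minimality makes $W^{t_1}\oplus\dots\oplus W^{t_{j-1}}$ direct, so $j-1\le\sum_{i<j}f^W(t_i)\le\dim(W)\le m$, i.e. $j\le m+1$, and a nonzero vector of the intersection produces a nontrivial relation $\sum_{i\le j}\hat J_i=0$ with $\hat J_i\in W^{t_i}$ among $j\le m+1$ focal points, contradicting the claim. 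Thus the sum is direct, and $\ind_W(I_0)=\sum_if^W(t_i)=\dim(\bigoplus_iW^{t_i})\le\dim(W)$. The main obstacle is exactly this bootstrap: the diagonal-dominance estimate of the previous paragraph is lossy, controlling only boundedly many focal points at once, and one must notice that the number actually needing control is at most $\dim(V)+1$, which is what legitimizes the choice of $\epsilon$.
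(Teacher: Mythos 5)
Your proof is correct and fleshes out precisely the argument the paper leaves implicit after the first Lemma: the almost-orthogonality, with respect to $s_t$, of focal vectors attached to nearby $W$-focal points yields both the non-accumulation of focal points and the directness of the sum $W^{t_1}\oplus\dots\oplus W^{t_N}$ inside $W$, hence $\ind_W(I_0)\le\dim(W)$, and your bootstrap via a minimal dependent subfamily correctly caps the number of vectors that must be controlled at $\dim(V)+1$. The only cosmetic deviation is that your $\epsilon=\tfrac{1}{4C(m+1)}$ depends on $\dim(V)$ as well as on $C$ (and, strictly, the finiteness of the focal points in a non-closed $I_0$ should be obtained by applying the directness claim to arbitrary finite subfamilies rather than quoted from discreteness); both points are harmless here, since $V$ is fixed throughout the paper and the finite-subfamily argument is already contained in what you wrote.
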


 In the case $\dim (W) =1$ we get:
\begin{lem} \label{dim1}
Let $J$ be a non-zero Jacobi field. 
If $J(t^+)=J(t^-)=0$, for some $t^+>t^- \in I$,
then $|t^+-t^-| > \epsilon$, where $\epsilon$ depends only on the upper
bound on $||R(t)||$.
\end{lem}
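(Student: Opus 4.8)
The plan is to deduce \lref{dim1} directly from the preceding lemma by specializing to the one-dimensional isotropic subspace spanned by $J$. First I would set $W = \R\cdot J \subset \Jac$. Since $\omega$ is antisymmetric we have $\omega(J,J)=0$, so $W$ is automatically isotropic, and $\dim(W)=1$ because $J$ is non-zero. This places us exactly in the situation of the previous lemma, with $\epsilon$ the constant produced there (depending only on an upper bound for $\|R(t)\|$).

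Next I would check that both $t^-$ and $t^+$ are $W$-focal with focal index $1$. By uniqueness for the Jacobi equation, a non-zero $J$ with $J(t^-)=0$ satisfies $J'(t^-)\neq 0$, hence $J$ is not identically zero on any subinterval; therefore $W(t^{\pm}) = \{J(t^{\pm})\} = \{0\}$, so that $\dim(W(t^{\pm}))=0$ and $W^{t^{\pm}}=W$, giving $f^W(t^{\pm}) = \dim(W)-\dim(W(t^{\pm})) = 1 > 0$. Consequently $\ind_W([t^-,t^+]) \geq f^W(t^-)+f^W(t^+) = 2$.

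Finally, suppose for contradiction that $|t^+-t^-|\leq \epsilon$. Then $I_0 := [t^-,t^+]$ is an interval of length $\leq \epsilon$, and the preceding lemma yields $\ind_W(I_0)\leq \dim(W)=1$, contradicting $\ind_W(I_0)\geq 2$. Hence $|t^+-t^-|>\epsilon$, as claimed.

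Since the substantive work — the almost-orthogonality estimate coming from Rauch's comparison theorem — has already been carried out in the two preceding lemmas, I do not expect a genuine obstacle here; the only point deserving a word of justification is that $J$ cannot vanish on a whole subinterval, which is immediate from ODE uniqueness. One could alternatively give a self-contained argument by observing that $s_{t^+}(J,J) = \|J'(t^+)\|^2 > 0$ while the almost-orthogonality estimate forces this quantity to be small when $t^+-t^-$ is small, but quoting the earlier lemma is cleaner.
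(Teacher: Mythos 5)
Your proposal is correct and follows exactly the paper's route: the paper states \lref{dim1} as the case $\dim(W)=1$ of the preceding lemma, and your argument simply fills in that specialization (with $W=\R\cdot J$, both endpoints having focal index $1$, forcing $\ind_W([t^-,t^+])\geq 2 > \dim(W)$). The aside about $J$ not vanishing on a subinterval is not actually needed, since $W(t^{\pm})=\{0\}$ follows directly from $J(t^{\pm})=0$, but it does no harm.
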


  Finally we get:
\begin{lem} \label{semi}
 Let $R_n (t)$ be a sequence of families of
symmetric endomorphisms converging in the $\mathcal C^0$ topology to $R(t)$.
Let $W_n$ be isotropic spaces of $R_n$-Jacobi fields that converge
to an isotropic space $W$ of $R$-Jacobi fields.
Let $I_0 =[a,b]\subset I$ be a compact interval. Then
 $\ind _W (I_0) \geq \ind _ {W_n} (I_0)$, for all $n$ large enough.  
\end{lem}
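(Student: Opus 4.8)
The plan is to show that, for $n$ large, each $W_n$-focal point in $I_0$ can be "charged" to nearby $W$-focal behaviour without overcounting, so that the total $W_n$-index cannot exceed $\ind_W(I_0)$. First I would invoke the previous lemma: there is an $\epsilon>0$, depending only on a common upper bound for $\|R_n(t)\|$ and $\|R(t)\|$ (which exists by $\mathcal{C}^0$-convergence), such that on any subinterval of length $\le\epsilon$ the $W$-index and each $W_n$-index is at most $\dim(W)=\dim(W_n)$. Partition $I_0$ into finitely many closed subintervals $I_0=\bigcup_{k=1}^N J_k$ of length $<\epsilon$, chosen so that the interior endpoints $p_1,\dots,p_{N-1}$ are \emph{not} $W$-focal (possible since $W$-focal points are discrete). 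Since $\ind_W$ and $\ind_{W_n}$ are additive over such a partition (focal indices simply sum), it suffices to prove $\ind_W(J_k)\ge\ind_{W_n}(J_k)$ for each $k$ and all large $n$; because there are finitely many $k$, "large $n$" can be chosen uniformly.

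So the problem reduces to a single short interval $J_k=[\alpha,\beta]$ of length $<\epsilon$. Here the key tool is the linear-algebra description of the focal index via the identification $I^t:\Jac\to V\times V$ and the scalar product $s_t$. The total index $\ind_W([\alpha,\beta])$ equals the number (with multiplicity) of $t\in[\alpha,\beta]$ at which the evaluation map $W\to W(t)$, $J\mapsto J(t)$, drops rank; equivalently, using the almost-orthogonality computation preceding the previous lemma, on an $\epsilon$-short interval the various focal subspaces $W^t$ for distinct $t$ are $s_{t_0}$-almost orthogonal, so $\ind_W([\alpha,\beta])=\dim\bigl(\mathrm{span}\{W^t : t\in[\alpha,\beta]\}\bigr)$, and likewise for $W_n$. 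I would then argue by contradiction: suppose along a subsequence $\ind_{W_n}([\alpha,\beta])>\ind_W([\alpha,\beta])$. Pick, for each such $n$, focal times $t_n^{(1)}<\dots<t_n^{(r)}$ in $[\alpha,\beta]$ (with multiplicity $r=\ind_{W_n}([\alpha,\beta])$) and corresponding $s$-orthonormal Jacobi fields $J_n^{(i)}\in W_n$ with $J_n^{(i)}(t_n^{(i)})=0$. Passing to a further subsequence, $t_n^{(i)}\to t^{(i)}\in[\alpha,\beta]$ and, using convergence $W_n\to W$ together with continuous dependence of solutions of the Jacobi equation on $R$ and initial data, $J_n^{(i)}\to J^{(i)}\in W$ with $J^{(i)}(t^{(i)})=0$, and the $J^{(i)}$ remain $s$-orthonormal (limits of orthonormal systems with controlled base points stay orthonormal). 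This forces $\ind_W([\alpha,\beta])\ge r$, a contradiction.

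The main obstacle is the passage to the limit in the Jacobi fields while keeping the focal conditions and the orthonormality: one must make precise that "$W_n\to W$" lets us choose bases $J_n^{(i)}$ of $W_n$ converging to a basis of $W$, that the focal field at $t_n^{(i)}$ can be taken inside the converging family (this is where the $\epsilon$-shortness is essential — it guarantees $\dim W^{t}$ behaves well and that an $s_{t_n}$-orthonormal focal system does not degenerate in the limit), and that the limiting base points $t^{(i)}$ carry the full multiplicity. This is exactly the quantitative content of the almost-orthogonality estimate derived above, so the lemma will follow once that estimate is combined with a routine compactness argument; no comparison theory beyond the cited Rauch estimate is needed. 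I would close by remarking that equality generally fails for non-Lagrangian $W$ because $W(t)$ can jump in dimension under perturbation — the Lagrangian case, where this is rigid, is taken up in \pref{propo}.
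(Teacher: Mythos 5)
Your proposal is correct and in substance identical to the paper's (very terse) proof: both rest on the same two observations, namely that vanishing conditions persist under the limit $W_n\to W$, $t_n\to t$ (so limit focal fields land in $W^t$), and that the Rauch-based estimate makes focal subspaces at distinct nearby times almost $s_t$-orthogonal, so collapsing $W_n$-focal points cannot carry more multiplicity than $\dim W^t$. Your partition into $\epsilon$-short intervals and the compactness/contradiction packaging are just a more detailed write-up of this, with only cosmetic slips (exact versus almost orthonormality across distinct focal times, and additivity at shared partition endpoints, where subadditivity of $\ind_{W_n}$ already suffices).
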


\begin{proof} It is enough to observe that for $t_n \to t \in I_0$ the limit
of $W^{t_n}$ is contained in $W^t$, and that for sequences  $t^+_n  >t^-_n$
converging to the same $t\in I_0$, the limits of $W^{t_n ^+} $ and $W^{t_n^-}$
are orthogonal with respect to the scalar product $s_t$.
\end{proof}

\subsection{Continuity}  If the isotropic subspaces $W_n$
and $W$ in \lref{semi} are Lagrangian then the inequality turns out to be
an equality under the additional assumption, that the focal indices at the
boundary points are constant.
  To see this one has either to interpret the $W$-index 
as the index of 
some bilinear form (as it is done in the most important geometric situations, 
cf. \cite{Sakai},p.99), or
to interpret the index as the Maslov index of Lagrangian intersections,
as  in \cite{Dui},p.180-186, see also \cite{piccione} for a more detailed 
account.
  We are going to sketch the last approach for the 
convenience of the reader.

 Namely, the map $J\to (J,J')$ identifies Jacobi fields with flow
lines of the time dependent vector field $X'(t) =A(t) X(t)$ on
the vector space $T=V\times V$, where $A(t)$ is given by
$A(t) (v_1,v_2)= (v_2, -R(t)v_1)$. This flow preserves the canonical
symplectic form $\omega$ on $T$, given by 
$\omega ((v_1,v_2),(w_1,w_2))= \langle v_1,w_2 \rangle - 
\langle v_2,w_1 \rangle  $.
Thus, for each Lagrangian subspace $\Lambda \subset T$ the family 
$\Lambda (t):= \{X(t)| X\in \Lambda \}$ is a curve in the space
$\Lagr$ of all Lagrangians of $T$.

 Consider the fixed Lagrangian subspace $\Lambda _0 =\{ 0 \} \times V$
of $T$. By definition, for each Lagrangian subspace $\Lambda$ of $\Jac$,  the
focal index $f^{\Lambda } (t)$ is   given by 
$f^{\Lambda (t) } = \dim (\Lambda (t)  \cup \Lambda _0)$.

The space $Lagr ^0$ of all Lagrangians transversal to $\Lambda _0$
is a contractible space. Thus each curve  $\gamma (t)$ in $\Lagr $
whose endpoints are in $\Lagr ^0$  can be (uniquely up to homotopy),  
completed  to a closed curve $\bar \gamma$
by connecting the endpoints of $\gamma$
inside of $\Lagr ^0$.  Hence, such  $\gamma$ gives us  a well defined
element in $\pi _1 (\Lagr )= \mathbb Z$.  The image of such a curve
$\gamma$ in $\mathbb Z$ is called the Maslov-Arnold  index of $\gamma$
and is denoted by $[\gamma ]$.

 The Maslov-Arnold index  $[\gamma ]$ is equal to the intersection
number of $\bar \gamma$ and the cycle given by $\Lagr \setminus \Lagr ^0$
and can be computed as follows.  For each  time  $t$
with  non-zero  intersection  $F_t =\gamma (t) \cap \Lambda _0$, one
computes the restriction  to $F_t$ of the  symmetric bilinear form 
$B \in \Sym (\Lambda _0)$, given  by 
$\gamma ' (t) \in T_{\Lambda _0}  \Lagr = \Sym (\Lambda _0)$. If this
bilinear form $B$ on $F_t$ is non-degenerate, its signature is the contribution
of the point $\gamma (t)$ to the Maslov-Arnold index.  In our case,
$\gamma (t) = \Lambda (t)= \{X(t)| X\in \Lambda , X'(t)=A(t) X(t)\}$,
the bilinear form $B =\Lambda ' (t)$  is defined by $B(x,y)= \omega (A(t)x,y)$,
for $x,y \in \Lambda (t)$.  By the definition of $A(t)$, we have
$B(x,x)= ||x||^2$ for each $x\in \{0\} \times V= \Lambda _0$.
Thus $B$ is positive definite on each intersection space $F_t$
and the contribution of the $\Lambda$-focal point $t$ to $[\gamma ]$ is 
precisely the focal index $f^{\Lambda} (t)$. We conclude that
the Maslov-Arnold index of the flow line $\Lambda  :[a,b] \to \Lagr$
coincides with the $\Lambda$-index $\ind ^{\Lambda} ([a,b])$ if
   the endpoints $a$ and $b$ are not $\Lambda$-focal. Since
the Maslov-Arnold index $[\gamma ]$ is a topological notion, it is stable under
small perturbations and we get the same conclusion for 
$\ind ^{\Lambda}  ([a,b])$. Now we can finish the

\begin{proof}[Proof of \pref{propo}] 
  The semi-continuity in the general case was shown in \lref{semi}.
Thus let us assume that $W$ and $W_n$ are Lagrangian.
Due to the semi-continuity of indices and the assumption 
$f^{W_n} (a) =f^W(a)$ and $f^{W_n} (b)= f^W (b)$, we find some $\epsilon >0$
such that $W$ and $W_n$ have no focal points in $[a-\epsilon,a)$ and
$(b,b+\epsilon]$. Thus the $W$-indices of $[a,b]$ and of 
$[a-\epsilon ,b+\epsilon]$ coincides and the same statement is true for the
$W_n$-indices. Now the $W_n$- and $W$-indices of $[a-\epsilon,b+\epsilon]$
are equal to the corresponding Maslov-Arnold indices and the last ones 
are stable under small perturbations.  
\end{proof}

\subsection{Geometric interpretation} \label{inter}
Let $I$ be an interval and let $T\to I$ be a Riemannian vector bundle 
with a Riemannian connection  and a family of  symmetric 
endomorphisms $R:T\to T$.  The connection  is flat and defines
an isomorphism of $T$ and the canonical bundle $I\times V\to I$. Thus
all results discussed above apply to this situation. The most prominent
example is the case of the normal bundle $\mathcal N$ along a geodesic 
$\gamma$ in a Riemannian manifold $M$, where the endomorphisms $R$
are the curvature endomorphisms $R(X)= \mathcal R(X,\gamma ')\gamma '$.
Most prominent examples of Lagrangian subspaces of the spaces of Jacobi
fields are spaces $\Lambda ^N$ of all normal $N$-Jacobi fields, where $N$ is a
submanifold of $M$ orthogonal to $\gamma$. A special and most important
case is that of a $0$-dimensional submanifold $N =\{ \gamma (a) \}$ 
that defines
the Lagrangian $\Lambda ^a$ of  all Jacobi fields $J$ with $J(a)=0$.
In this case the $\Lambda$-index
can be interpreted as the index of a symmetric bilinear form on a
 Hilbert space or as the index of a Morse function on a space of curves.
In this case the results discussed above  are contained in any 
book on Riemannian geometry.

\section{Transversal Jacobi equation} \label{Sec3}
\subsection{The construction}
Let $T\to I$ be a Riemannian vector bundle over an interval $I$
with a Riemannian connection $\nabla$ and a field $R:T\to T$ of 
symmetric endomorphisms. Let $\Jac$ be the space of Jacobi fields and
let $W\subset \Jac$ be an isotropic subspace. We are going 
to describe Wilking's construction
of the transversal Jacobi equation (\cite{Wilk},p.3).

  Wilking observed that the family 
$\tilde W(t):= W(t) \oplus \{ J' (t)|J\in W ^t \}$ is a smooth subbundle
of $T$. Notice that $W(t)=\tilde W(t)$ for all non-focal values of $t$.
Denote by $\mathcal H$ the orthogonal complement of 
$\tilde W$ and by $P:T\to \mathcal H$  the orthogonal projection.
Then $P$ defines an identification between $\mathcal H$ and
$T/\tilde W$.  The mapping $A(J(t))=P(J' (t))$ extends to a smooth field
of homomorphisms $A:\tilde W \to \mathcal H$, by setting 
$A(J' (t))=0$, for all
$J\in W^t$. 

Consider the field  $R^{\mathcal H} :\mathcal H\to \mathcal H$ of symmetric
endomorphisms defined by $R^{\mathcal H} (Y)= P(R(Y)) + 3 AA^{\ast} (Y)$.
Denote by $\nabla ^{\mathcal H}$ the induced covariant derivative on
$\mathcal H$, that is defined by $\nabla ^{\mathcal H} (Y) =P (\nabla Y)$.
Wilking  proved (\cite{Wilk},p.5) that for each Jacobi field  
$J\in W ^{\perp} \subset \Jac$, the projection $Y=P(J)$ is an 
$R^{\mathcal H}$-Jacobi field, i.e., we have
$$ \nabla ^{\mathcal H} (\nabla ^{\mathcal H} (Y)) + R^{\mathcal H} (Y) =0$$ 
Two $R$-Jacobi fields $J_1,J_2 \in W^{\perp}$ have the same
projection to $\mathcal H$ if and only if $J_1 -J_2 \in W$. 
Thus the induced map $I: W^{\perp} /W \to \Jac ^{R^{\mathcal H}}$ is 
injective and by dimensional reasons it is an isomorphisms.
Thus $R^{\mathcal H}$-Jacobi fields are precisely the projections
of Jacobi fields in $W^{\perp}$; and Lagrangians in $\Jac ^{R^{\mathcal H}}$
are projections of Lagrangian in $\Jac$ that contain $W$.

 Finally, we have the following equality of indices:
\begin{lem} \label{inddeco}
In the notations above, for each Lagrangian subspace 
$\Lambda \subset \Jac$ that contains $W$ we have the equality 
$\ind _W (I) + \ind _{\Lambda /W} (I) =\ind _{\Lambda } (I)$. 
\end{lem}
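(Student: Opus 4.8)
\subsection*{Proof proposal}

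The plan is to prove the identity \emph{pointwise}, i.e.\ to establish $f^{\Lambda}(t)=f^{W}(t)+f^{\Lambda /W}(t)$ for every $t\in I$ and then sum over $t$ (the same argument works verbatim over any subinterval, so nothing is lost by taking all of $I$). Throughout, $\Lambda /W$ is read as a Lagrangian in $\Jac^{R^{\mathcal H}}$ via the isomorphism $I\colon W^{\perp}/W\to\Jac^{R^{\mathcal H}}$ recalled above: a class $J+W$ with $J\in\Lambda\subset W^{\perp}$ corresponds to the $R^{\mathcal H}$-Jacobi field $Y=P(J)$, so that $Y(t)=0$ if and only if $P(J(t))=0$, i.e.\ if and only if $J(t)\in\tilde W(t)$.

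Fix $t\in I$ and set $\Lambda^{\tilde W,t}:=\{\,J\in\Lambda\st J(t)\in\tilde W(t)\,\}$. Since $W(t)\subset\tilde W(t)$ we have $W\subset\Lambda^{\tilde W,t}$, and since two fields in $W^{\perp}$ have the same $\mathcal H$-projection exactly when they differ by an element of $W$, the map $J\mapsto P(J)$ carries $\Lambda^{\tilde W,t}$ onto $(\Lambda /W)^{t}$ with kernel $W$. Hence $f^{\Lambda /W}(t)=\dim(\Lambda /W)^{t}=\dim\Lambda^{\tilde W,t}-\dim W$, and everything reduces to the equality
$$\dim\Lambda^{\tilde W,t}=\dim\Lambda^{t}+\dim W(t),$$
because then, using $\dim W^{t}=\dim W-\dim W(t)$, one obtains $f^{\Lambda /W}(t)=\dim\Lambda^{t}-\dim W^{t}=f^{\Lambda}(t)-f^{W}(t)$, as wanted.

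To prove the displayed equality I would use the evaluation map $\mathrm{ev}_{t}\colon\Lambda^{\tilde W,t}\to\tilde W(t)$, $J\mapsto J(t)$: its kernel is exactly $\Lambda^{t}$, so $\dim\Lambda^{\tilde W,t}=\dim\Lambda^{t}+\dim\mathrm{ev}_{t}(\Lambda^{\tilde W,t})$, and it remains to see that $\mathrm{ev}_{t}(\Lambda^{\tilde W,t})=W(t)$. The inclusion $\supseteq$ is immediate from $W\subset\Lambda^{\tilde W,t}$. For $\subseteq$, take $J\in\Lambda^{\tilde W,t}$ and, using Wilking's splitting $\tilde W(t)=W(t)\oplus\{\tilde J'(t)\st \tilde J\in W^{t}\}$, write $J(t)=w(t)+\tilde J'(t)$ with $w\in W$ and $\tilde J\in W^{t}$. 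Then $J-w\in\Lambda$ and $\tilde J\in W\subset\Lambda$, so isotropy of $\Lambda$ gives
$$0=\omega(J-w,\tilde J)=\langle (J-w)(t),\tilde J'(t)\rangle-\langle (J-w)'(t),\tilde J(t)\rangle=\langle\tilde J'(t),\tilde J'(t)\rangle,$$
since $(J-w)(t)=\tilde J'(t)$ and $\tilde J(t)=0$. Hence $\tilde J'(t)=0$, so $J(t)=w(t)\in W(t)$. This proves the pointwise identity, and summing $f^{\Lambda}(t)=f^{W}(t)+f^{\Lambda /W}(t)$ over $t\in I$ yields the lemma.

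I do not expect a genuine obstacle. The only substantive step is the identification $\mathrm{ev}_{t}(\Lambda^{\tilde W,t})=W(t)$, which is exactly where the isotropy of \emph{the full Lagrangian $\Lambda$} — and not merely of $W$ — together with Wilking's direct-sum description of $\tilde W(t)$ is used; the rest is bookkeeping. One should keep in mind that the focal index of a non-focal value is $0$ (so the pointwise identity needs no case distinction), that $f^{\Lambda /W}$ and $(\Lambda /W)^{t}$ are computed with respect to the transversal data $R^{\mathcal H}$ and $\nabla^{\mathcal H}$, and that for noncompact $I$ the summation identity is to be read over compact subintervals, or as an equality of values in $\mathbb{N}\cup\{\infty\}$.
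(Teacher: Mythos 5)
Your proposal is correct and follows essentially the same route as the paper: the heart of both arguments is that isotropy of the full Lagrangian $\Lambda$ together with Wilking's splitting $\tilde W(t)=W(t)\oplus\{\tilde J'(t)\st \tilde J\in W^{t}\}$ forces $\Lambda(t)\cap\tilde W(t)=W(t)$, which yields the pointwise identity $f^{\Lambda}(t)=f^{W}(t)+f^{\Lambda/W}(t)$ and then the lemma by summation. Your write-up merely makes the dimension bookkeeping (via $\Lambda^{\tilde W,t}$ and the evaluation map) explicit where the paper compresses it into one line.
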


\begin{proof}
Let $t\in I$ be given. For each $J_1 \in \Lambda$ and $J_2 \in W^t$,
we have $\langle J_1 (t) , J'_2 (t)\rangle =0$. Thus for
each $J\in \Lambda$ the inclusions $J(t)\in W(t)$ and $J(t)\in \tilde W(t)$
are equivalent.   Hence
$\Lambda (t) \cap \tilde W (t) =\Lambda (t) \cap W(t)$
and we deduce 
$f^t (W) +f^t(\Lambda /W)= f^t(\Lambda )$. Summing up the
focal indices gives us the result.
\end{proof}

\begin{rem} \label{remark}
The index formula above  imply the following explosion of indices in quotients,
see the next subsection for a geometric interpretation. In the notations
of \pref{propo}, let $W_n \subset \Lambda _n$ be pairs of $R_n$-isotropic
and larger Lagrangian subspaces that converge to the pair $W\subset \Lambda$.
We get smooth transversal Jacobi equation with symmetric endomorphisms
$R^{\mathcal H_n}$ and  $R^{\mathcal H}$.
Note that at all non-focal points of $W$ the transversal endomorphisms
$R^{\mathcal H_n}$ converge to $R^{\mathcal H}$  and $\Lambda _n / W_n$
converge to $\Lambda /W$ on the complement of the set of $W$-focal points.
However, if $\ind _{W_n} (I_0) < \ind _   W(I_0)$ for all $n$,
 a situation that happens 
very often, then we deduce $\ind _{\Lambda /W} (I_0) < 
\ind _{\Lambda _n  /W_n} (I_0)$. Thus at some $W$-focal points the 
transversal endomorphisms $ R^{\mathcal H_n}$ are forced to have a very 
steep bump producing focal points.  It seems that if indices of $W_n$ are 
stable, the fields  $   R^{\mathcal H_n}$ should converge to $R^{\mathcal H}$
in the $\mathcal C^0$ topology, but we have checked this statement
only in the  special situation of \cite{Expl}. 
\end{rem}

\subsection{Geometric interpretation}  \label{inter1}
In the situation of \cite{Wilk}, the meaning of $\Lambda /W$
is not easy to describe. However, the origin of the tensor $R^{\mathcal H}$ 
defined above is the curvature endomorphism in the base of a Riemannian 
submersion, a situation that we will shortly describe now (cf. \cite{LTvar}
for a more detailed exposition).
Thus let $f:M\to B$ be a Riemannian submersion, let $\gamma $ be a geodesic
in $M$ and let $\bar \gamma = f(\gamma )$ be its image in $B$. Let
$R,\bar R$ be the curvature endomorphisms along $\gamma$ and 
$\bar \gamma$ respectively.   Consider the space $W$ of all Jacobi
fields along $\gamma$ that arise as variational fields of geodesic variations
$\gamma _s$ such that $f(\gamma _s) =\bar \gamma$, for all $s$.  Then
$W$ is an isotropic subspace, since it is contained the space $\Lambda ^N$
of normal $N$-Jacobi fields, where $N=f^{-1} (f(\gamma (a)))$ for any
$a$.  In this case the additional term $AA^{\ast}$ is just the O'Neill
tensor (\cite{o},p.465) and  the field  $R^{\mathcal H}$ coincides with 
$\bar R$.  In this case $W ^{\perp}$ 
consists of all variational fields of variations
through horizontal geodesics, as one deduces by  counting of dimensions.
The ``horizontal'' index $\ind _{\Lambda /W} (\gamma )$ describes the index of 
the geodesic $\bar \gamma$ in the quotient space. The ``vertical'' index
$\ind _W (\gamma )$ is $0$ in this case, but in the similar and much 
more general
situation of a singular Riemannian foliation (cf. \cite{Expl})
 it counts the intersections
of $\gamma$ with singular leaves. Then the formula of \lref{inddeco} describes
a natural decomposition of the $\Lambda$-index in a horizontal part 
seen in the quotient below and a vertical part counting the intersections with
the singular leaves.

\section{Applications} \label{Sec4}
\subsection{Conjugate points} \label{conjpoit}
Let $V,I\subset \R,R(t),\Jac$ be as in the introduction. Points $a<b \in I$
are called {\it conjugate} if there is some $J\in \Jac$ with $J(a)=J(b)=0$.
Equivalently, one can say that $b$ is $\Lambda ^a$-focal.
Here and below we use the notation $\Lambda ^a = \{ J\in \Jac | J(a)=0 \}$.
Before proving \tref{theorem} we are going to derive
its consequences \cref{complete} and \cref{con}.

\begin{proof} [Proof of \cref{con}]  Assume the contrary. Then
the Lagrangian $\Lambda ^{\bar a}$ has index $0$ on the interval $I_0=[a,b]$
and $\ind _{\Lambda ^a} (I _0) \geq dim (V)+1$. 
This contradicts \tref{theorem}.
\end{proof}

\begin{proof} [Proof of \cref{complete}]
Consider the normal bundle $\mathcal N$ along $\gamma$ with
the induced connection $\nabla$ and the curvature endomorphism $R$. 
Let $\Lambda ^N$ denote the Lagrangian of all normal $N$-Jacobi fields
along $\gamma$. Then the number of $N$-focal points along $\gamma$ counted
with multiplicity is precisely $\ind_{\Lambda ^N} (I) -f^{\Lambda ^N} (0) =
\ind_{\Lambda ^N}(I) - ((n-1) - \dim (N))$, where $I$ is the
interval of definition of $\gamma$.  

Thus it is enough to prove $\ind_{\Lambda ^N} (I) \leq n-1$. 
Due to \tref{theorem}, it is enough to find a Lagrangian $\Lambda$ without
focal points on $I$.  By assumption, for each $a\in \R$  the space $\Lambda^a$
has no focal points with exception of $a$.  Let the time $a$ go to the a 
boundary of the interval $I$
and choose a convergent subsequence of the Lagrangian subspaces $\Lambda ^a$.
Then the limiting Lagrangian subspace  $\Lambda ^{\infty}$ 
(``the space  of parallel Jacobi fields'') has no focal points in $I$, due
to \pref{propo}.  
\end{proof}

\subsection{The main theorem} Now we are going to prove a slightly 
more general version of \tref{theorem}.

\begin{prop}  \label{gen} Let $V,I\subset \R,R,\Jac$ be as usual. Then for
any  Lagrangians $\Lambda _1, \Lambda _2 \subset \Jac$ and any 
interval $I_0$ we have  
$\ind _{\Lambda _1} (I_0) -\ind _{\Lambda _2} (I_0) 
\leq \dim (V) -\dim (\Lambda _1 \cap \Lambda _2)$.
\end{prop}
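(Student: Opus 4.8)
The plan is to reduce \pref{gen} to the one-dimensional case via Wilking's transversal Jacobi equation and the index decomposition of \lref{inddeco}, and then to settle the one-dimensional case by hand using the continuity principle of \pref{propo}. First I would set $W = \Lambda_1 \cap \Lambda_2$, which is an isotropic subspace of $\Jac$ contained in both Lagrangians. Applying the transversal Jacobi construction of Section \ref{Sec3} to $W$, I obtain a new Jacobi equation $R^{\mathcal H}$ on a bundle $\mathcal H$ of rank $\dim(V) - \dim(W)$, and the Lagrangians $\Lambda_i/W \subset \Jac^{R^{\mathcal H}}$. By \lref{inddeco}, $\ind_{\Lambda_i}(I_0) = \ind_W(I_0) + \ind_{\Lambda_i/W}(I_0)$ for $i=1,2$; subtracting, the vertical term $\ind_W(I_0)$ cancels and the claim becomes $\ind_{\Lambda_1/W}(I_0) - \ind_{\Lambda_2/W}(I_0) \leq \dim(\mathcal H) = \dim(V) - \dim(W)$, with the additional feature that $\Lambda_1/W \cap \Lambda_2/W = 0$. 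So it suffices to prove the statement assuming $\Lambda_1 \cap \Lambda_2 = 0$, i.e. to prove $\ind_{\Lambda_1}(I_0) - \ind_{\Lambda_2}(I_0) \leq \dim(V)$ for transversal Lagrangians.

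Next I would reduce to $\dim(V)=1$. The idea is to iterate: pick a one-dimensional isotropic subspace and peel it off, or more efficiently, induct on $\dim(V)$. Concretely, if $\dim(V) \geq 2$, choose a suitable one-dimensional isotropic $W' \subset \Lambda_2$ (for instance spanned by a Jacobi field in $\Lambda_2$), pass to the transversal equation for $W'$, which drops the dimension by one, and compare $\ind_{\Lambda_1 + W'/W'}$ — here one must be slightly careful since $W'$ need not lie in $\Lambda_1$; the fix is to enlarge, replacing $\Lambda_1$ by a Lagrangian $\Lambda_1'$ containing $W'$ and controlling how $\ind_{\Lambda_1'}$ differs from $\ind_{\Lambda_1}$ (they differ by the behavior on a one-dimensional space, hence by a controllable amount). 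Carrying the bookkeeping through, the general inequality follows from the $\dim(V)=1$ case by induction, losing exactly one unit of "$\dim(V)$" at each step.

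For the base case $\dim(V)=1$: here $\Jac$ is two-dimensional, every nonzero Jacobi field spans a Lagrangian line, and a focal point of $\Lambda = \R J$ is just a zero of $J$, always of index $1$. By \lref{dim1} consecutive zeros of a fixed nonzero Jacobi field are separated by a definite gap $\epsilon$ depending only on a bound for $|R|$. Given two lines $\Lambda_1 = \R J_1$, $\Lambda_2 = \R J_2$ on a compact $I_0$, I want $\#\{\text{zeros of }J_1\} - \#\{\text{zeros of }J_2\} \leq 1$. This is a Sturm-type interlacing statement: between two consecutive zeros of $J_2$ there is at least one zero of $J_1$ (since $J_1$ and $J_2$ solve the same second-order linear scalar ODE, their Wronskian $\omega(J_1,J_2)$ is a nonzero constant, so they cannot share a zero and must interlace), hence $J_1$ has at most one more zero than $J_2$ on any interval. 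For the general (non-interlacing, higher-rank) transversal case I expect one cannot argue purely combinatorially and must instead invoke \pref{propo}: deform $R$ and the Lagrangians to a model situation (e.g. $R \equiv 0$, where everything is explicit) keeping the boundary focal indices fixed, so that the index difference is preserved, and read off the bound $\dim(V)$ in the model.

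\textbf{Main obstacle.} The delicate point is the dimension-reduction step: when I peel off a one-dimensional isotropic subspace $W'$ that is not contained in $\Lambda_1$, I must both replace $\Lambda_1$ by a nearby Lagrangian containing $W'$ and track the resulting change in $\ind_{\Lambda_1}(I_0)$, and simultaneously ensure the boundary-index hypotheses needed to apply \pref{propo} can be arranged (by shrinking or enlarging $I_0$ slightly past focal points, as in the proof of \pref{propo}). Making this accounting precise — so that the total loss over all reduction steps is exactly $\dim(V) - \dim(\Lambda_1\cap\Lambda_2)$ and not more — is where the real work lies; the one-dimensional Sturm interlacing itself is routine.
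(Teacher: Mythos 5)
Your overall skeleton matches the paper's: set $W=\Lambda_1\cap\Lambda_2$, use Wilking's construction together with \lref{inddeco} to cancel the common term $\ind_W(I_0)$, observe that $\Lambda_1/W\cap\Lambda_2/W=0$, and settle $\dim(V)=1$ by a Sturm-type separation argument. That base case is sound and is in substance the same fact the paper proves topologically (local injectivity of $c\mapsto \Lambda^c$ into $\Lagr\cong S^1$, via \lref{dim1}).

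The genuine gap is the transversal case in dimension $m>1$, which you leave open and yourself flag as ``where the real work lies.'' Your peeling scheme does not close as described: $\Lambda_1+W'$ is not isotropic when $W'\not\subset\Lambda_1$, and the assertion that $\ind_{\Lambda_1'}$ and $\ind_{\Lambda_1}$ ``differ by a controllable amount'' because the two Lagrangians share a hyperplane is precisely an instance of the proposition being proved (with bound $m-(m-1)=1$), so it cannot be taken for granted. The missing idea is a single triangle inequality with an auxiliary Lagrangian: pick a hyperplane $W\subset\Lambda_1$, a nonzero $J\in\Lambda_2\cap W^{\perp}$ (which exists since $\dim W^{\perp}=m+1$), and set $\Lambda_3=W\oplus\R J$; then $\dim(\Lambda_3\cap\Lambda_1)=m-1$ and $\Lambda_3\cap\Lambda_2\neq 0$, so the nonzero-intersection case --- which only uses the inductive hypothesis in dimension $<m$ and is therefore available when treating the transversal case in dimension $m$ --- gives $|\ind_{\Lambda_1}(I_0)-\ind_{\Lambda_3}(I_0)|\leq 1$ and $|\ind_{\Lambda_3}(I_0)-\ind_{\Lambda_2}(I_0)|\leq m-1$, and adding finishes the proof with no iteration, no bookkeeping, and no continuity argument. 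Your proposed fallback --- deforming $R$ to $R\equiv 0$ and invoking \pref{propo} --- would not work: \pref{propo} is a limit statement requiring the focal indices at the endpoints to stay fixed, and along a large homotopy of $R$ focal points cross the endpoints of $I_0$, so neither the indices nor their difference is preserved; the bound $\dim(V)$ on the difference is exactly what is to be proved, not an invariant that can be read off in a model. The paper's proof of \pref{gen} uses no perturbation argument at all.
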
 

\begin{proof} 
We proceed by induction on $\dim (V)$ and start with the case $\dim (V)=1$.
Then $\dim (\Lambda _i)=1$ and we may assume $\Lambda _1 \neq \Lambda _2$.
Assume that $\ind _{\Lambda _1} (I_0) -\ind _{\Lambda _2} (I_0 )\geq 2 $.

Note that for  any  $c\in[a,b]$ the  space $\Lambda ^c$ 
is $1$-dimensional, thus
all focal points have multiplicity one.  Therefore we find
an interval $I_1 \subset I_0$ with $\ind _{\Lambda _1 } (I_1) =2$
and $\ind _{\Lambda _2} (I_1)=0$. We may assume $I_1 =[a,b]$ and 
$\Lambda _1 =\Lambda ^a =\Lambda ^b$. 
Since $\dim (V)=1$,
the space $\Lagr$ of all Lagrangians is homeomorphic to $\R P^1 =S^1$. 
Consider the continuous 
map $F:I_1 \to \Lagr$ given by $F(c)=\Lambda ^c$. Due to \lref{dim1},
the map is locally injective, thus $F((a,b))$ is an open connected 
subset of $S^1$. Since $F(a)=F(b)=\Lambda _1$, the image $F(I_1)$ is a compact
subset of $S^1$ with at most one boundary point $\Lambda ^1$. But no compact
subset of $S^1$ has precisely one boundary point. Thus $F(I_1)=\Lagr$.
Therefore, there is some $c\in I_1$ with $\Lambda _2 =\Lambda ^c$.
This contradicts  $\ind _{\Lambda _2 } (I_1) =0$ and finishes the proof
in the case $\dim (V)=1$.

 Let us now assume $\dim (V)=m>1$ and let the result be true in all dimensions
smaller than $m$.  Consider the isotropic
subspace  $W=\Lambda _1 \cap \Lambda _2$ and assume that $W\neq 0$.
Then $\ind _{\Lambda _i} (I_0) = \ind _W (I_0) + \ind _{\Lambda _i /W} (I_0)$, 
for $i=1,2$. Thus  replacing $V,R$ by the $W$-transversal Jacobi
equation and using \lref{inddeco} and our inductive assumption  we get 
$$|ind _{\Lambda _1} (I_0) -ind _{\Lambda _2 } (I_0)| = 
|ind _{\Lambda _1 /W} (I_0) -ind _{\Lambda _2 /W} (I_0)| \leq 
\dim (V) -\dim (W)$$   
 This proves the statement in the case $W\neq 0$. In the  case $W=0$
one finds a Lagrangian $\Lambda _3$ with 
$\dim (\Lambda _3 \cap \Lambda _1)= m-1$ and 
$\dim (\Lambda _3 \cap \Lambda _2 )=1$ (To find  such $\Lambda _3$,
take any $(m-1)$-dimensional subspace $W$ of $\Lambda _1$, find a 
non-zero vector $J$ in the intersection $\Lambda _2 \cap W^{\perp}$
and set $\Lambda _3 := W \oplus \{ J \}$).  Using the result for
Lagrangians with non-zero intersection  we get:
$$|ind _{\Lambda _1} (I_0) -ind _{\Lambda _2 } (I_0)| \leq 
|ind _{\Lambda _1} (I_0) -ind _{\Lambda _3 } (I_0)| +
|ind _{\Lambda _1} (I_0) -ind _{\Lambda _2 } (I_0)| \leq m$$
\end{proof}

\bibliographystyle{alpha}
\bibliography{jacobi}

\end{document}